\setlist{listparindent=0pt,parsep=3pt}
\newtheorem{thm}{Theorem}[section]
\newtheorem{prop}[thm]{Proposition}
\newtheorem*{proposition*}{Proposition}
\newtheorem*{theorem*}{Theorem}
\theoremstyle{definition}
\theoremstyle{remark}
\newtheorem*{rem}{Remark}
\numberwithin{equation}{section}
\newcommand{\R}{\mathbb{R}}
\newcommand{\C}{\mathbb{C}}
\newcommand{\Z}{\mathbb{Z}}
\newcommand{\N}{\mathbb{N}}
\newcommand{\I}{\mathrm{I}}
\DeclareSymbolFont{script}{U}{eus}{m}{n}
\DeclareSymbolFontAlphabet{\mathscr}{script}
\DeclareMathSymbol{\EuWedge}{0}{script}{"5E}
\newcommand{\Wedge}{\EuWedge}
\newcommand{\fso}{\mathfrak{so}}
\newcommand{\fsu}{\mathfrak{su}}
\newcommand{\g}{\mathfrak{g}}
\renewcommand{\q}{\mathfrak{q}}
\newcommand{\gc}{\g^{\C}}
\newcommand{\set}[1]{\{#1\}}
\newcommand{\abs}[1]{\lvert#1\rvert}
\newcommand{\half}{\tfrac12}
\renewcommand{\I}{\sqrt{-1}}
\DeclareMathOperator{\ad}{ad}
\newcommand{\n}{\mathfrak{n}}
\title{On canonical elements}
\author{F.E. Burstall}
\begin{document}
\begin{abstract}
  We characterise the canonical elements, in the sense of
  Burstall--Rawnsley \cite{BurRaw90}, of a compact semisimple
  Lie algebra and discuss the case of $\fso(n)$ in detail.
  In so doing, we correct two errors in Burstall et
  al.\ \cite{BurEscFerTri04}. 
\end{abstract}
\maketitle

\section{Introduction}
\label{sec:introduction}

Let $\g$ be a compact semisimple Lie algebra with
complexification $\gc$.  Burstall--Rawnsley \cite{BurRaw90}
associate a unique element $\xi\in\g$ to each parabolic subalgebra
$\q\leq\gc$ with the property that the eigenspaces of $\ad\xi$
grade the central descending series of the nilradical of
$\q$.  They called $\xi$ the \emph{canonical element} of
$\q$.

Canonical elements provide a natural realisation of flag
manifolds (conjugacy classes of parabolic subalgebras) as
adjoint orbits in $\g$ and give rise to natural fibrations
of flag manifolds over Riemannian symmetric spaces that
factor through twistor space \cite{BurRaw90}.  Moreover,
canonical elements label certain Schubert cells
in the based loop group and, in this context, have been
a useful tool for studying harmonic maps of finite uniton
number \cite{BurGue97,CorPac15,FerSimWoo17}.

In the appendix to \cite{BurEscFerTri04}, Burstall et al.\
claim a characterisation of those elements of $\g$ that are
canonical for some parabolic subalgebra.  They also offer a
classification of the canonical elements of $\fsu(n)$ and
$\fso(n)$.  Unfortunately, their characterisation does not
capture all canonical elements and their classification,
while correct for $\fsu(n)$ and $\fso(2n+1)$, misses a
necessary condition in the $\fso(2n)$ case.

It is the purpose of this short note to remedy these
deficiencies.

\subsection*{Acknowledgements}
It is a pleasure to thank Joe Oliver and Martin Svensson for
pointing out the mistakes in \cite{BurEscFerTri04}, John
C. Wood for urging me to write this note, and David
Calderbank, for a very helpful conversation during its
preparation.

\section{Parabolic subalgebras and their canonical elements}
\label{sec:parab-subalg-their}

Let $\g$ be a compact semisimple Lie algebra with
complexification $\gc$.  Recall that a subalgebra
$\q\leq\gc$ is \emph{parabolic} if it contains a maximal
solvable subalgebra of $\gc$ or, equivalently, if the polar
$\q^{\perp}$ of $\q$ with respect to the Killing form is a
nilpotent subalgebra \cite[Lemma~4.2]{BurRaw90}.  Then
$\q^{\perp}$ is the nilradical of $\q$.

\begin{rem}
  It follows that $\gc$ itself is parabolic.  More
  generally, if $\q\leq\gc$ is parabolic and $\hat{\g}^{\C}$
  is another complex semisimple algebra then
  $\q\oplus\hat{\g}^{\C}$ is a parabolic subalgebra of
  $\gc\oplus\hat{\g}^{\C}$.  
\end{rem}

We can construct parabolic subalgebras of $\gc$ from the
eigenspaces of $\ad\xi$, for $\xi\in\g$.  Since $\ad\xi$ is
skew for the (negative definite) Killing form, it is
semisimple with eigenvalues in $\I\R$.  For $r\in\R$, let
$\g_r\leq\gc$ denote the $\I r$-eigenspace of $\ad\xi$ when $\I r$
is an eigenvalue and $\set{0}$ otherwise.  Then
\begin{equation*}
  [\g_r,\g_s]\leq \g_{r+s}\qquad \g_r^{\perp}=\sum_{s+r\neq
    0}\g_s\qquad \overline{\g_{r}}=\g_{-r},
\end{equation*}
where the complex conjugation is across the compact real
form $\g$.  It now follows at once that $\sum_{r\geq0}g_{r}$
is parabolic with nilradical $\sum_{r>0}g_r$.

Many elements of $\g$ give rise to the same parabolic
subalgebra but, for any parabolic subalgebra, there is a
canonical choice available whose positive eigenspaces
grade the central descending series of the nilradical.
Indeed, Burstall--Rawnsley prove:
\begin{thm}[{\cite[Theorem~4.4]{BurRaw90}}]
  \label{th:1}
  Let $\q\leq\gc$ be a parabolic subalgebra with nilradical
  $\n=\q^{\perp}$.  Then there is a unique $\xi\in\g$ such
  that:
  \begin{compactenum}[(i)]
  \item the eigenvalues of $\ad\xi$ lie in $\I\Z$;
  \item For $r\in\Z$, let $\g_r\leq\gc$ denote the $\I
    r$-eigenspace of $\ad\xi$ and $\n^{(r)}$ the $r$-th step
    of the central descending series of $\n$: thus
    $\n^{(1)}=\n$ and $\n^{r+1}=[\n,\n^{(r)}]$, for $r>1$.
    Then
    \begin{equation*}
      \q=\sum_{i\geq 0}g_i\qquad n^{(r)}=\sum_{i\geq r}g_i.
    \end{equation*}
  \end{compactenum}
  We call $\xi$ the \emph{canonical element} of $\q$.
\end{thm}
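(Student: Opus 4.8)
The plan is to prove uniqueness first --- it is short and already determines what $\xi$ must be --- and then to prove existence by reducing to a standard parabolic and writing $\xi$ down explicitly via a root-space decomposition. \textbf{Uniqueness.} Suppose $\xi,\xi'\in\g$ both satisfy (i)--(ii) for $\q$, with eigenspaces $\g_i$ and $\g_i'$. Since $\overline{\g_i}=\g_{-i}$ we have $\bar\q=\sum_{i\le0}\g_i$, so $\g_0=\q\cap\bar\q$ depends only on $\q$; likewise $\g_0'=\q\cap\bar\q$, and I write $\mathfrak{l}$ for this common reductive (Levi) subalgebra. As $\g_0=\ker\ad\xi$ and $\xi\in\g_0$, the element $\xi$ lies in the centre $Z(\mathfrak{l})$, and so does $\xi'$; hence $\zeta:=\xi-\xi'\in Z(\mathfrak{l})$, so $[\zeta,\xi]=0$ and $\ad\zeta$ preserves every $\g_r$. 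On the other hand, (ii) identifies $\g_r$ with $\n^{(r)}/\n^{(r+1)}$ for $r\ge1$, and both $\ad\xi$ and $\ad\xi'$ act there as the scalar $\I r$, so $\ad\zeta$ maps $\n^{(r)}$ into $\n^{(r+1)}$; as $\g_r\subseteq\n^{(r)}$ this forces $\ad\zeta(\g_r)\subseteq\g_r\cap\n^{(r+1)}=0$ for $r\ge1$. Applying complex conjugation (and $\bar\zeta=\zeta$) gives $\ad\zeta(\g_r)=0$ also for $r\le-1$, while $\ad\zeta$ annihilates $\g_0=\mathfrak{l}$ by construction. Hence $\ad\zeta=0$ and, $\gc$ being semisimple, $\zeta=0$.

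\textbf{Reduction for existence.} The assignment $\q\mapsto\xi$ is equivariant for the adjoint action of a connected compact group $G$ with Lie algebra $\g$: automorphisms carry the central descending series of a nilradical along, and $\mathrm{Ad}(g)$ sends the $\I r$-eigenspace of $\ad\xi$ to that of $\ad(\mathrm{Ad}(g)\xi)$. Since $G$ acts transitively on every flag manifold $G^{\C}/Q$ --- a standard fact (the orbit of the base point is open, because $\g+\q=\gc$, and compact, hence all of the connected space $G^{\C}/Q$) --- every parabolic is $\mathrm{Ad}(G)$-conjugate to a standard one. So it suffices to treat $\q=\q_S:=\h\oplus\bigoplus_{\alpha\in\Delta^+\cup(\Delta\cap\Z S)}\g_\alpha$, where $\h=\mathfrak{t}^{\C}$ for a maximal torus $\mathfrak{t}\le\g$, the positive system $\Delta^+$ has simple roots $\Pi$, and $S\subseteq\Pi$; its nilradical is $\n=\bigoplus_{\alpha\in\Delta^+\setminus\Z S}\g_\alpha$.

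\textbf{The candidate.} For $\alpha=\sum_{\alpha_i\in\Pi}n_i\alpha_i$ put $\mathrm{ht}_S(\alpha)=\sum_{\alpha_i\notin S}n_i$. Let $\xi\in\mathfrak{t}$ be the element with $\alpha(\xi)=\I\,\mathrm{ht}_S(\alpha)$ for all $\alpha\in\Delta$; it exists and is unique because prescribing $\alpha_i(\xi)=\I$ for $\alpha_i\in\Pi\setminus S$ and $\alpha_i(\xi)=0$ for $\alpha_i\in S$ determines an element of $\mathfrak{t}$ (the $\alpha_i$ being a basis of $\h^{*}$). Then $\ad\xi$ has eigenvalue $\I\,\mathrm{ht}_S(\alpha)\in\I\Z$ on $\g_\alpha$ and $0$ on $\h$, so (i) holds and $\g_r=\bigoplus_{\mathrm{ht}_S(\alpha)=r}\g_\alpha$ for $r\ne0$, with $\g_0=\h\oplus\bigoplus_{\mathrm{ht}_S(\alpha)=0}\g_\alpha$. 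Since $\mathrm{ht}_S(\alpha)\ge0$ holds exactly when $\alpha\in\Delta^+$ or $\alpha\in\Delta\cap\Z S$, we get $\sum_{i\ge0}\g_i=\q_S$. It remains to prove $\n^{(r)}=\bigoplus_{\mathrm{ht}_S(\alpha)\ge r}\g_\alpha$ for all $r\ge1$, for then $\n^{(r)}=\sum_{i\ge r}\g_i$ and (ii) is complete.

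\textbf{The central descending series, and the main obstacle.} The inclusion $\n^{(r)}\subseteq\bigoplus_{\mathrm{ht}_S(\alpha)\ge r}\g_\alpha$ is immediate from $[\g_\alpha,\g_\beta]\subseteq\g_{\alpha+\beta}$ and additivity of $\mathrm{ht}_S$; the reverse inclusion is the crux. I would induct on $r$, the case $r=1$ being the description of $\n$ above. Given a root $\gamma$ with $\mathrm{ht}_S(\gamma)\ge r+1$ (necessarily positive and non-simple), use the classical fact that $\gamma-\alpha_i\in\Delta^+$ for some simple root $\alpha_i$, so that $\g_\gamma=[\g_{\alpha_i},\g_{\gamma-\alpha_i}]$. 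If $\alpha_i\notin S$, then $\mathrm{ht}_S(\gamma-\alpha_i)=\mathrm{ht}_S(\gamma)-1\ge r$, the inductive hypothesis puts $\g_{\gamma-\alpha_i}$ in $\n^{(r)}$, and hence $\g_\gamma\subseteq[\n,\n^{(r)}]=\n^{(r+1)}$. If $\alpha_i\in S$, then $\gamma-\alpha_i$ still has $\mathrm{ht}_S\ge r+1$ but strictly fewer simple constituents lying in $S$, so a nested induction on the defect $\mathrm{ht}(\gamma)-\mathrm{ht}_S(\gamma)$ places $\g_{\gamma-\alpha_i}$ in $\n^{(r+1)}$; and since each $\n^{(k)}$ is an ideal of $\q_S$ (a short induction from the fact that $\n$ is an ideal of $\q_S$) and $\g_{\alpha_i}\subseteq\q_S$, it follows that $\g_\gamma\subseteq\n^{(r+1)}$. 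The delicate point --- and the step I expect to be the main obstacle --- is precisely this bookkeeping: checking that the reverse inclusion is not lost when the simple root one peels off lies in $S$, and that the nested induction terminates because that defect drops by one each time one stays in that case (at defect $0$ one is necessarily in the first case). Once this is settled, (ii) holds, and together with uniqueness the theorem follows.
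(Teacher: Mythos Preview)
The paper does not prove this theorem: it is quoted from \cite{BurRaw90} and used as background, so there is no in-paper argument to compare your proposal against.

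Your proof is nonetheless correct. The uniqueness half is clean: once you observe that $\g_0=\q\cap\bar\q$ depends only on $\q$ and that both candidates lie in $Z(\g_0)$, the identification of $\g_r$ with $\n^{(r)}/\n^{(r+1)}$ forces $\ad(\xi-\xi')$ to vanish on each graded piece, and semisimplicity finishes it. For existence, the reduction to a standard parabolic and the explicit $\xi$ defined via the $S$-height function is standard and correctly executed; the nested induction on the defect $\mathrm{ht}(\gamma)-\mathrm{ht}_S(\gamma)$ is the right bookkeeping device, and your observation that each $\n^{(k)}$ is an ideal of $\q_S$ (an easy Jacobi induction) is exactly what is needed to absorb the $\g_{\alpha_i}$ factor when $\alpha_i\in S$.

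For what it is worth, your root-theoretic approach is the one the paper alludes to in the remark following the proof of \cref{th:2}, where it points to \cite{BurRaw90} for a proof ``using roots with respect to a maximal torus in $\g_0$''. So while there is nothing here to compare against directly, your argument is in the spirit of the original source.
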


In this note, we characterise those $\xi\in\g$ which are
canonical for some parabolic subalgebra of $\gc$.

\section{Characterisation of canonical elements}
\label{sec:char-canon-elem}

Throughout this section, we fix $\xi\in\g$ and let
$\g_r\leq\gc$ denote the $\I r$-eigenspace of $\ad\xi$.  Thus
\begin{equation*}
  \gc=\bigoplus\g_{r}.
\end{equation*}
We prove:
\begin{thm}
  \label{th:2}
  $\xi$ is the canonical element of some parabolic
  subalgebra of $\gc$ if and only if
  \begin{compactenum}[(1)]
  \item $\ad\xi$ has eigenvalues in $\I\Z$;
  \item $\gc$ is generated by $\g_1\oplus\g_0\oplus\g_{-1}$.
  \end{compactenum}
\end{thm}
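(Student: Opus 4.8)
The plan is to prove the two implications separately, using Theorem~\ref{th:1} as the essential input in each; write $\langle S\rangle$ for the Lie subalgebra of $\gc$ generated by a subset $S$. For necessity, suppose $\xi$ is the canonical element of a parabolic $\q\leq\gc$ with nilradical $\n$. Then (1) is exactly Theorem~\ref{th:1}(i). For (2), combine $\n^{(r)}=[\n,\n^{(r-1)}]$ with Theorem~\ref{th:1}(ii): for $r\geq2$, $[\n,\n^{(r-1)}]=\sum_{i\geq1,\,j\geq r-1}[\g_i,\g_j]$, and since $[\g_i,\g_j]\leq\g_{i+j}$ the only term of this sum meeting the $\I r$-eigenspace of $\ad\xi$ is $[\g_1,\g_{r-1}]$; as that eigenspace of $\n^{(r)}=\sum_{i\geq r}\g_i$ is all of $\g_r$, this forces $\g_r=[\g_1,\g_{r-1}]$ for every $r\geq2$. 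An immediate induction then gives $\g_r\leq\langle\g_1\rangle$ for all $r\geq1$; conjugation across $\g$ (a conjugate-linear automorphism interchanging $\g_r$ and $\g_{-r}$) gives $\g_r\leq\langle\g_{-1}\rangle$ for all $r\leq-1$; and $\g_0\leq\langle\g_0\rangle$ trivially. Hence $\gc=\bigoplus_r\g_r$ is generated by $\g_1\oplus\g_0\oplus\g_{-1}$.

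For sufficiency, assume (1) and (2). By the discussion in Section~\ref{sec:parab-subalg-their}, $\q:=\sum_{i\geq0}\g_i$ is parabolic with nilradical $\n=\sum_{i>0}\g_i$, and by (1) the eigenvalues of $\ad\xi$ lie in $\I\Z$; so by the uniqueness clause of Theorem~\ref{th:1} it suffices to verify $\n^{(r)}=\sum_{i\geq r}\g_i$ for all $r\geq1$, since then $\xi$ satisfies (i)--(ii) of that theorem for $\q$ and is therefore its canonical element. The inclusion $\n^{(r)}\leq\sum_{i\geq r}\g_i$ is immediate from $[\g_i,\g_j]\leq\g_{i+j}$. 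For the reverse inclusion I would first establish that $\n$ is generated by $\g_1$: granting this, each $\g_k$ is spanned by iterated brackets of $k$ elements of $\g_1$, and any such bracket lies in $\n^{(k)}$ because $[\n^{(a)},\n^{(b)}]\leq\n^{(a+b)}$, so $\g_k\leq\n^{(k)}\leq\n^{(r)}$ whenever $k\geq r$, giving $\sum_{i\geq r}\g_i\leq\n^{(r)}$.

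The step I expect to be the main obstacle is thus showing $\n=\langle\g_1\rangle$. The plan is to prove that $\mathfrak{a}:=\langle\g_1\rangle+\g_0+\langle\g_{-1}\rangle$ is an ideal of $\gc$: it contains $\g_1\oplus\g_0\oplus\g_{-1}$, so assumption (2) then forces $\mathfrak{a}=\gc$, and since $\mathfrak{a}$ is the direct sum of its three summands, which lie in $\ad\xi$-degrees $\geq1$, $=0$ and $\leq-1$ respectively, comparison of positive-degree parts yields $\langle\g_1\rangle=\sum_{i>0}\g_i=\n$. To see $\mathfrak{a}$ is an ideal it suffices, since $\ad$ is a homomorphism and $\g_1\oplus\g_0\oplus\g_{-1}$ generates $\gc$, to check that $\mathfrak{a}$ is stable under $\ad x$ for $x$ in $\g_1$, $\g_0$ or $\g_{-1}$; by the conjugation symmetry this reduces to the two relations $[\g_0,\langle\g_1\rangle]\leq\langle\g_1\rangle$ and $[\g_{-1},\langle\g_1\rangle\cap\g_k]\leq\langle\g_1\rangle$ for $k\geq2$ (with $[\g_{-1},\g_1]\leq\g_0$ settling $k=1$), each proved by induction on $k$ from the identity $\langle\g_1\rangle\cap\g_k=[\g_1,\langle\g_1\rangle\cap\g_{k-1}]$ ($k\geq2$) and the Jacobi identity. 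These nested inductions are the only genuinely technical point; the rest is bookkeeping with the $\ad\xi$-grading. (Alternatively one may pass to a maximal torus through $\xi$ and note that (2) amounts to every simple root having $\ad\xi$-eigenvalue $0$ or $\I$ for a compatible positive system, making the grading the height grading of a standard parabolic with $\xi$ visibly its canonical element; the argument above has the merit of not choosing a Cartan subalgebra.)
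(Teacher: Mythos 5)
Your proof is correct and follows essentially the same route as the paper: the forward implication extracts $\g_r=[\g_1,\g_{r-1}]$ from $\n^{(r)}=[\n,\n^{(r-1)}]$ by comparing graded pieces exactly as the paper does, and the converse builds the graded subspace spanned by $\g_0$ and the iterated brackets of $\g_{\pm1}$ and uses condition (2) to identify it with $\gc$. The only cosmetic difference is that you verify your $\mathfrak{a}$ is an ideal via stability under $\ad$ of the generators, whereas the paper checks directly that $\bigoplus_r\g^r$ is a subalgebra; the underlying Jacobi-identity inductions are the same.
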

\begin{rem}
  In the appendix to \cite{BurEscFerTri04}, Burstall et al.\
  propose an alternative definition of canonical element
  where the second condition of \cref{th:2} is replaced by
  the more stringent requirement that $\gc$ be generated by
  $\g_1\oplus\g_{-1}$.  Moreover, they assert (footnote 8,
  page~64) that this definition coincides with our present
  one.  However, the example of $\xi=0$ (so that $\q=\gc$)
  shows that this assertion is false.  Even if we restrict
  attention to proper parabolic subalgebras, we get
  counter-examples from the canonical elements of parabolic
  subalgebras of the form $\q\oplus\hat{\g}^{\C}$ discussed
  above.  In particular, maximal proper parabolic
  subalgebras of $\fso(4)$ fall into this category.
\end{rem}
\begin{proof}[Proof of \cref{th:2}]
  Let $\q\leq\gc$ be parabolic with nilradical $\n$ and
  canonical element $\xi$.  Then $\xi$ clearly satisfies
  condition (1).  For condition (2), we have
  \begin{equation*}
    \gc=\n\oplus\g_0\oplus\overline{\n}
  \end{equation*}
  so it suffices to show that $\g_1$ generates $\n$ or,
  equivalently, that, for $k\geq1$,
  $[\g_1,\g_k]=\g_{k+1}$. For this, note that
  $[\g_1,\g_k]\leq\g_{k+1}$ and also
  \begin{equation*}
    [\n,\n^{(k)}]=\n^{(k+1)},\qquad \n^{(k)}=\g_k\oplus\n^{(k+1)}.
  \end{equation*}
  It follows that
  \begin{equation*}
    [\g_1,\g_k]\equiv\g_{k+1}\mod\n^{(k+2)},
  \end{equation*}
  whence the result.

  For the converse, suppose that $\xi$ satisfies the
  conditions (1) and (2). Define $\g^k$, $k\geq 1$,
  inductively by $\g^1=\g_1$ and
  $\g^{k+1}=[\g_1,\g^k]\leq\g_{k+1}$.  Further, set
  $\g^{0}=\g_0$ and $\g^{-k}=\overline{\g^k}$, for
  $k\geq 1$.  The parabolic $\q$ defined by $\xi$ has
  nilradical $\bigoplus_{k\geq 1}\g_k$ so that $\xi$ will be
  canonical as soon as $\g_k=\g^k$, for $k\geq 1$.  The
  Jacobi identity and an induction gives us
  \begin{equation*}
    [\g_0,\g^r]\leq\g^r,\qquad [\g_{\pm1},g^r]\leq \g^{r\pm1},
  \end{equation*}
  for all $r\in\Z$, and then a further induction yields
  \begin{equation*}
    [\g^r,\g^s]\leq \g^{r+s},
  \end{equation*}
  for all $r,s\in\Z$.  Thus $\bigoplus_{r\in\Z}\g^r$ is a
  subalgebra of $\gc$ containing
  $\g_1\oplus\g_0\oplus\g_{-1}$ so that, by condition (2),
  $\bigoplus_{r\in\Z}\g^r=\gc$.  In particular, $\g^k=\g_k$
  for all $k$ and the theorem is proved.
\end{proof}
\begin{rem}
  See Theorem 4.4 of \cite{BurRaw90} and remark (i)
  following it for a proof of the forward implication using
  roots with respect to a maximal torus in $\g_0$.
\end{rem}

\section{Example}
\label{sec:examples}

We apply \cref{th:2} to describe all the canonical elements
in $\fso(n)$.  Another approach would be to use the explicit
description of canonical elements in terms of roots
\cite{BurRaw90}: for this, see \cite{CorPac15}.

An element $\xi\in\fso(n)$ is determined up to conjugacy by
its eigenvalues and their multiplicities.  We will see that
a multiplicity can intervene in the condition for $\xi$ to be
canonical.

\begin{prop}
  \label{th:3}
  Let $\xi\in\fso(n)$.  Then $\xi$ is canonical if and only
  if either:
  \begin{compactenum}[(a)]
  \item For some $k\in\N$, $\xi$ has integer eigenvalues $\pm\I j$,
    $0\leq j\leq k$, or
  \item For some $k\in\N$, $\xi$ has half-integer
    eigenvalues $\pm\I(j+\half)$, $0\leq j\leq k$ \emph{and}
    the eigenvalues $\pm\half\I$ have multiplicity at least
    $2$.
  \end{compactenum}
\end{prop}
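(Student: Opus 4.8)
The plan is to apply \cref{th:2} to $\gc=\fso(n,\C)$, realised as the exterior square $\Wedge^{2}V$ of the standard module $V=\C^{n}$ with its complex bilinear form $\langle\cdot,\cdot\rangle$, via $a\wedge b\mapsto(x\mapsto\langle b,x\rangle a-\langle a,x\rangle b)$. I decompose $V=\bigoplus_{\mu}V_{\mu}$ into eigenspaces of $\xi$ acting on $V$, so that $\xi|_{V_{\mu}}=\I\mu$; since $\xi$ is real and skew-adjoint, $\overline{V_{\mu}}=V_{-\mu}$ and $\langle\cdot,\cdot\rangle$ restricts to a perfect pairing $V_{\mu}\times V_{-\mu}\to\C$ while $V_{\mu}\perp V_{\nu}$ whenever $\mu+\nu\neq0$. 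Then $\g_{r}=\bigoplus_{\mu+\nu=r}V_{\mu}\wedge V_{\nu}$, with the convention $V_{\mu}\wedge V_{\mu}=\Wedge^{2}V_{\mu}$, and the whole proposition reduces to bookkeeping with the subspaces $V_{\mu}\wedge V_{\nu}$ together with the bracket identity $[a\wedge b,c\wedge d]=\langle b,c\rangle\,a\wedge d-\langle a,c\rangle\,b\wedge d+\langle b,d\rangle\,c\wedge a-\langle a,d\rangle\,c\wedge b$, whose key feature is that the only surviving contractions pair a $V_{\mu}$-factor with a $V_{-\mu}$-factor.

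First I would dispose of condition (1) of \cref{th:2}: the eigenvalues of $\ad\xi$ are the numbers $\I(\mu+\nu)$ for eigenvalues $\mu,\nu$ of $\xi$ on $V$, together with $2\I\mu$ whenever $\dim V_{\mu}\geq2$. Leaving aside the abelian algebra $\fso(2)$ (which lies outside the standing hypotheses, and for which only $\xi=0$ could qualify, matching (a) with $k=0$), a short case analysis shows that (1) forces $2\mu\in\Z$ and $\mu\equiv\nu\pmod{\Z}$ for all eigenvalues; hence either every eigenvalue of $\xi$ lies in $\I\Z$ or every one lies in $\I(\half+\Z)$, which are exactly the ambient alternatives in (a) and (b). It then remains to determine, within each alternative, precisely which eigenvalue/multiplicity data make condition (2) hold.

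For condition (2) I would use the reformulation from the proof of \cref{th:2}: with $\g^{1}=\g_{1}$ and $\g^{k+1}=[\g_{1},\g^{k}]$, the element $\xi$ is canonical iff $\g^{k}=\g_{k}$ for all $k\geq1$. \emph{Necessity}: if $\g_{s}=0$ while $\g_{s+1}\neq0$ for some $s\geq1$ then $\g^{k}=0$ for all $k\geq s$, so the eigenvalues of $\ad\xi$ must form a full interval of integers; feeding this, together with the sharper requirement $\g^{2}=\g_{2}$, into the bracket identity forces the eigenvalues of $\xi$ to be the ``staircase'' $\{0,\pm1,\dots,\pm k\}$ in the integer case and $\{\pm\half,\dots,\pm(k+\half)\}$ in the half-integer case, and in the latter case also forces $\Wedge^{2}V_{1/2}\neq0$, i.e.\ $\dim V_{1/2}\geq2$ --- since if $\dim V_{1/2}=1$ then $\g_{1}$ has no component of $\xi$-weight $\half$, so the component $V_{3/2}\wedge V_{1/2}$ of $\g_{2}$ cannot be reached from $[\g_{1},\g_{1}]$. \emph{Sufficiency}: assuming one of these configurations, I would prove $\g^{k}=\g_{k}$ by induction on $k$, obtaining at each stage the component $V_{\mu}\wedge V_{\nu}$ of $\g_{k}$ with $|\nu|$ smallest by bracketing an appropriate component of $\g_{1}$ against the inductively-known component of $\g_{k-1}$ and contracting through $V_{0}$ (integer case) or through $V_{1/2}$ --- here $\dim V_{1/2}\geq2$ is exactly what makes the needed $\Wedge^{2}V_{1/2}$-contraction available --- the remaining components of $\g_{k}$ following in the same way.

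I expect the main obstacle to be the necessity direction of the analysis of condition (2): it is not enough that the $\ad\xi$-eigenvalues form an interval --- for example $\xi$ with eigenvalues $0,\pm2\I,\pm3\I$ on $V$ (say, each nonzero eigenvalue of multiplicity two) has $\ad\xi$-eigenvalues filling $\{0,\pm\I,\dots,\pm6\I\}$, yet $\g_{1}=V_{3}\wedge V_{-2}$ and hence $[\g_{1},\g_{1}]=0$ --- so one must extract the finer ``staircase'' shape, and in particular locate the multiplicity obstruction at the bottom weight in the half-integer case, purely from the combinatorics of the wedge bracket; this last point is exactly the condition missed in \cite{BurEscFerTri04}. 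The sufficiency direction is then an induction of the type sketched above.
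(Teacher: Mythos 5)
Your framework---the identification $\gc\cong\Wedge^2\C^n$, the weight decomposition $\g_r=\bigoplus_{\mu+\nu=r}V_\mu\wedge V_\nu$, and the reduction to $\g^k=\g_k$ for all $k$---is exactly the paper's, and both your sufficiency induction and your derivation of the multiplicity condition in case (b) (via the unreachable summand $V_{3/2}\wedge V_{1/2}$ of $\g_2$) are sound. The genuine gap is in the necessity of the ``staircase'' shape of the spectrum of $\xi$ itself. You claim that once the $\ad\xi$-eigenvalues fill an interval of $\I\Z$, the single further requirement $\g^2=\g_2$ forces the eigenvalues of $\xi$ to be consecutive. That implication is false. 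Take $\xi$ with eigenvalues $0,\pm\I,\pm2\I,\pm5\I$ on $\C^n$, with $\dim V_{\pm5}=1$ and $\dim V_{\pm1}\ge2$ (so $n\ge 9$). The $\ad\xi$-eigenvalues fill $\{0,\pm\I,\dots,\pm7\I\}$; moreover $\g_1=V_0\wedge V_1\oplus V_{-1}\wedge V_2$ and $\g_2=\Wedge^2V_1\oplus V_0\wedge V_2$, and the bracket identity gives $[\g_1,\g_1]=\g_2$, so your criterion is satisfied; yet the spectrum is not of the form (a), and indeed $\xi$ is not canonical, because no bracket of the factors $V_0,V_{\pm1},V_{\pm2}$ occurring in $\g_1$ and $\g^2$ can ever produce the nonzero summand $V_{-2}\wedge V_5$ of $\g_3$. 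So the crux of the necessity direction---which you correctly identify as the hard point but then dispatch with an insufficient criterion---remains unproved in your outline.

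To repair it you would either have to run a genuinely more delicate induction using $\g^k=\g_k$ for \emph{all} $k$, or do what the paper does: step outside $\Wedge^2\C^n$ and exploit the standard module. By \cref{th:2}, any subspace of $\C^n$ invariant under $\g_0$ and $\g_{\pm1}$ is invariant under all of $\g$, and $\C^n$ is an irreducible $\fso(n)$-module; applying this to $W=\bigoplus_{\lambda\le\lambda_i}V_\lambda$ (for a putative gap $\lambda_{i+1}-\lambda_i\ge2$), to the span of the eigenspaces congruent to $\lambda_1$ (to get all eigenvalues congruent mod $\Z$, which you instead extract from condition (1)---that part of your argument is fine), and to $\bigoplus_{\lambda<0}V_\lambda$ (for the multiplicity condition) eliminates every bad configuration, including the one above, in one stroke. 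That irreducibility argument is the idea your proposal is missing.
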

\begin{proof}
  Set $\g=\fso(n)$ and $\xi\in\g$ be canonical with
  eigenvalues $\I\Delta\subset\I\R$.  Let
  $V_{\lambda}\leq(\R^n)^{\C}=\C^n$ be the
  $\I\lambda$-eigenspace of $\xi$ so that
  $\C^n=\bigoplus_{\lambda\in\Delta}V_\lambda$.  Since $\xi$
  is real and skew for the (complex bilinear extension of
  the) inner product $(\,,\,)$ on $\R^n$, we have
  \begin{equation*}
    V^{\perp}_\lambda=\sum_{\lambda+\mu\neq 0}V_\mu,
    \qquad V_{-\lambda}=\overline{V_{\lambda}}.
  \end{equation*}
  In particular, we see that $\Delta=-\Delta$ and that
  $V^{*}_{\lambda}\cong V_{-\lambda}$, for all
  $\lambda\in\Delta$.

  Now, $\g_0V_{\lambda}\leq V_{\lambda}$, for all
  $\lambda\in\Delta$, while,
  \begin{equation*}
    \g_{\pm1}V_{\lambda}\leq 
    \begin{cases}
      V_{\lambda\pm1}&\text{if $\lambda\pm
        1\in\Delta$,}\\
      \set{0}&\text{otherwise.}
    \end{cases}
  \end{equation*}
  Let $\Delta=\set{\lambda_1<\dots<\lambda_{N}}$ and set
  \begin{equation*}
    W=\bigoplus_{\lambda\in\Delta, \lambda-\lambda_1\in\Z}V_{\lambda}.
  \end{equation*}
  It follows that $W$ is preserved by $\g_{\pm1}$ and $\g_0$
  and so, thanks to \cref{th:2}, by $\g$.  Since $\C^n$ is
  an irreducible $\g$-module, we conclude that $W=\C^n$ so
  that any two $\lambda_i,\lambda_j$ differ by an integer.
  We can say more: if some $\lambda_{i+1}-\lambda_i\geq2$
  then $\bigoplus_{j\leq i}V_{\lambda_j}$ is again
  $\g$-invariant which is a contradiction.  Thus successive
  $\lambda_i$ differ by $1$ which together with
  $\Delta=-\Delta$ forces $\Delta$ to have one of the two
  forms required depending on the parity of $\abs{\Delta}$.

  Finally, if $\pm\half$ has multiplicity $1$ with
  eigenvectors $v_{\pm}$ and $\eta\in\g_{1}$, then, since
  $\eta$ is skew, $0=(\eta v_{-},v_-)$ which forces
  $\eta v_-=0$ since $(-,v_{-})$ spans $V_{\frac12}^{*}$.
  Thus, in this case, $\g_{1}V_{-\frac12}=\set{0}$ and so
  $\bigoplus_{\lambda<0}V_{\lambda}$ is $\g$-invariant:
  again a contradiction.

  For the converse, let $\xi\in\fso(n)$ have eigenvalues
  $\I\lambda$ as in (a) or (b) with eigenspaces
  $V_{\lambda}\leq\C^n$: thus
  $-\lambda_0\leq\lambda\leq\lambda_0$ is an integer or
  half-integer, $\lambda_0=k$ or $k+\half$ and
  $\dim V_{\frac12}\geq 2$ in the latter case.  We use the
  $\gc$-equivariant identification
  $\Wedge^{2}\C^{n}\cong\gc$ given by
  \begin{equation*}
    (a\wedge b)(c)=(a,b)b-(b,c)a
  \end{equation*}
  so that
  \begin{equation*}
    \gc=\sum_{\lambda\leq\mu}V_{\lambda}\wedge V_{\mu}
  \end{equation*}
  with $\ad\xi$ having eigenvalue $\I (\lambda+\mu)$ on
  $V_{\lambda}\wedge V_{\mu}$.

  We therefore have
  \begin{equation*}
    \g_1=\bigoplus_{-\lambda_0<\lambda\leq\frac12}V_{\lambda}\wedge V_{1-\lambda}
  \end{equation*}
  with the multiplicity condition in case (b) ensuring that
  $\Wedge V_{\frac12}$ is non-zero.  Now let $\ell>1$ and
  $V_{\lambda}\wedge V_{\ell-\lambda}$ be a non-zero summand
  of $\g_{\ell}$ with $\lambda\leq\ell-\lambda$.  Then
  \begin{equation*}
    -\lambda_0\leq \lambda\leq\ell-\lambda\leq\lambda_0
  \end{equation*}
  so that, first, $-\lambda_0\leq\lambda-1<\ell-\lambda$
  ensuring that $V_{\lambda-1}\wedge V_{\ell-\lambda}$ is
  non-zero\footnote{The reason for circumspection here is
    that $V_{\lambda}\wedge V_{\lambda}$ could vanish when
    $\abs{\lambda}>\half$.}.  Second, we see that
  $-\lambda_0\leq\lambda,1-\lambda\leq\lambda_0$ so that
  $V_{\lambda}\wedge V_{1-\lambda}$ is non-zero also.  A
  direct calculation now shows that
  \begin{equation*}
    [V_{\lambda-1}\wedge V_{\ell-\lambda},V_{\lambda}\wedge
    V_{1-\lambda}]=V_{\lambda}\wedge V_{\ell-\lambda}.
  \end{equation*}
  It follows that $\g_{\ell}=[\g_1,\g_{\ell-1}]$, for all
  $\ell>1$, and we conclude that $\xi$ is canonical.
\end{proof}
\begin{rem}
  \cref{th:3} corrects
  \cite[Proposition~A.2]{BurEscFerTri04} which omits the
  condition on $\dim V_{\frac12}$ in case (b).
\end{rem}

\begin{bibdiv}
\begin{biblist}

\bib{BurEscFerTri04}{article}{
      author={Burstall, F.~E.},
      author={Eschenburg, J.-H.},
      author={Ferreira, M.~J.},
      author={Tribuzy, R.},
       title={K\"{a}hler submanifolds with parallel pluri-mean curvature},
        date={2004},
        ISSN={0926-2245},
     journal={Differential Geom. Appl.},
      volume={20},
      number={1},
       pages={47\ndash 66},
  url={https://doi-org.ezproxy1.bath.ac.uk/10.1016/S0926-2245(03)00055-X},
      review={\MR{2030166}},
}

\bib{BurGue97}{article}{
      author={Burstall, F.~E.},
      author={Guest, M.~A.},
       title={Harmonic two-spheres in compact symmetric spaces, revisited},
        date={1997},
        ISSN={0025-5831},
     journal={Math. Ann.},
      volume={309},
      number={4},
       pages={541\ndash 572},
         url={http://dx.doi.org/10.1007/s002080050127},
      review={\MR{MR1483823 (99f:58046)}},
}

\bib{BurRaw90}{book}{
      author={Burstall, Francis~E.},
      author={Rawnsley, John~H.},
       title={Twistor theory for {R}iemannian symmetric spaces},
      series={Lecture Notes in Mathematics},
   publisher={Springer-Verlag},
     address={Berlin},
        date={1990},
      volume={1424},
        ISBN={3-540-52602-1},
        note={With applications to harmonic maps of Riemann surfaces},
      review={\MR{MR1059054 (91m:58039)}},
}

\bib{CorPac15}{article}{
      author={Correia, Nuno},
      author={Pacheco, Rui},
       title={Harmonic maps of finite uniton number and their canonical
  elements},
        date={2015},
        ISSN={0232-704X},
     journal={Ann. Global Anal. Geom.},
      volume={47},
      number={4},
       pages={335\ndash 358},
         url={https://doi.org/10.1007/s10455-014-9448-7},
      review={\MR{3331893}},
}

\bib{FerSimWoo17}{article}{
      author={Ferreira, Maria~Jo\~{a}o},
      author={Sim\~{o}es, Bruno~Ascenso},
      author={Wood, John~C.},
       title={{Harmonic maps into the orthogonal group and
           null curves}},
       journal={Math. Z.}
        date={to appear},
}

\end{biblist}
\end{bibdiv}

\end{document}